\theoremstyle{plain}
\newtheorem{thm}{Theorem}[section]
\newtheorem{prop}[thm]{Proposition}
\newtheorem{lem}[thm]{Lemma}
\theoremstyle{definition}
\title{Reliability Polynomials of Simple Graphs having Arbitrarily many Inflection Points}
\author{Danielle Blackwell, Christopher Hunt, Keyne\'e Johnson }
\date{\today}
\begin{document}

\maketitle

\begin{abstract}
In this paper we show that for each $n$, there exists a simple graph whose reliability polynomial has at least $n$ inflection points.
\end{abstract}

\section{Introduction}

The reliability of a graph $G$ is the probability that the graph remains connected when each edge is included, or ``functions", with independent probability $p$. Equivalently, we can say that each edge fails with probability $q = 1-p$.  This function can be written as a polynomial in either $p$ or $q$, though for our purposes it will be convenient to use $q$; for instance, if $f(q) = R(G)(q)$ for a nontrivial graph $G$, then we have $f(0) = 1$ and $f(1) = 0$. Since the first derivative of $R(G)(q)$ is always negative on $(0,1)$, it is natural to consider whether the second derivative is ever zero, i.e., whether $R(G)(q)$ has any inflection points.

It is typical for the reliability of a graph to have at least one inflection point, and families of simple graphs with reliability polynomials having two inflection points have been found \cite{BKK}.  In \cite{GM}, Graves and Milan show that there exist non-simple graphs whose reliability polynomials have at least $n$ inflection points for any integer $n$. They point out that no example is known of a simple graph whose reliability polynomial has more than two inflection points. What we show in this paper is that for each $n$, there exists a simple graph whose reliability polynomial has at least $n$ inflection points.
\section{Preliminaries}

Our proof consists of two major parts: we first demonstrate that there exist reliability polynomials whose second derivative satisfies certain bounds, and then from this collection of polynomials we form products which have arbitrarily many inflection points. The \emph{one-point union} of graphs $G$ and $H$, denoted $G*H$, is the graph union where exactly one vertex is chosen from each graph and the chosen vertices are identified. Regardless of the choices made the reliability polynomial of the one-point union of graphs is the product of the reliability polynomials of each graph. Ultimately, the graphs we use will be one-point unions of complete graphs, and in the following section we demonstrate that the second derivative of such graphs can be made arbitrarily small outside a given interval. 

In this section we establish a few facts about reliability polynomials in general.  The reliability polynomial of a graph $G$ can be written in the form \begin{equation}\label{reldefn}
R(G)(q) = \sum_{i=0}^m N_i (1-q)^i q^{m-i},
\end{equation} where $m$ is the number of edges in $G$, and $N_i$ counts the number of connected spanning subgraphs of $G$ with $i$ edges.

We first prove a fact about all polynomials having a form similar to \eqref{reldefn}.
\begin{prop}\label{derivbound} Let $f(q) = \sum_{i=0}^m N_i(1-q)^iq^{m-i}$, where the $N_i$ are either all non-negative or all non-positive.  Then for $q \in [0,1]$,
\[q(1-q)|f'(q)| \leq m|f(q)|.\]
\end{prop}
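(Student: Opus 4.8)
The plan is to reduce everything to a single clean identity obtained by differentiating $f$ term by term and then multiplying by the factor $q(1-q)$, which is exactly engineered to collapse each differentiated term back into a scalar multiple of the original term. First I would assume without loss of generality that all $N_i \geq 0$, since replacing $f$ by $-f$ changes neither side of the claimed inequality; this guarantees $f(q) \geq 0$ on $[0,1]$ because each monomial $(1-q)^i q^{m-i}$ is non-negative there.

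Writing $g_i(q) = N_i(1-q)^i q^{m-i}$, the product rule gives
\[
g_i'(q) = N_i\bigl[-i(1-q)^{i-1}q^{m-i} + (m-i)(1-q)^i q^{m-i-1}\bigr].
\]
The key observation is that multiplying by $q(1-q)$ restores the original powers in both terms, so that
\[
q(1-q)g_i'(q) = N_i\bigl[-i + (m-i)\bigr](1-q)^i q^{m-i} = (m-2i)\,g_i(q).
\]
Summing over $i$ yields the central identity $q(1-q)f'(q) = \sum_{i=0}^m (m-2i)\,g_i(q)$.

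From here the bound is essentially immediate. Since the $N_i$ share a common sign, the $g_i(q)$ are all non-negative on $[0,1]$, so $|f(q)| = f(q) = \sum_i g_i(q)$. Because $i$ ranges over $0,\dots,m$, the scalar satisfies $|m-2i|\leq m$ (with equality at the endpoints $i=0$ and $i=m$), and the triangle inequality then gives
\[
q(1-q)|f'(q)| = \Bigl|\sum_i (m-2i)g_i(q)\Bigr| \leq \sum_i |m-2i|\,g_i(q) \leq m\sum_i g_i(q) = m|f(q)|.
\]
There is no genuine obstacle in this argument; the entire content is recognizing that $q(1-q)$ is precisely the multiplier that converts $\tfrac{d}{dq}$ acting on the basis polynomials $(1-q)^i q^{m-i}$ into multiplication by the bounded integer $m-2i$. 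The only point requiring mild care is the sign bookkeeping that lets us identify $|f(q)|$ with $\sum_i g_i(q)$, which the normalization $N_i \geq 0$ handles cleanly.
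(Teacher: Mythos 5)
Your central identity is false as stated, and the error is in the algebra, not the strategy. Multiplying the first term of $g_i'(q)$, namely $-iN_i(1-q)^{i-1}q^{m-i}$, by $q(1-q)$ does restore the power of $(1-q)$, but the extra factor of $q$ raises the other exponent: you get $-iN_i(1-q)^{i}q^{m-i+1}$, not $-iN_i(1-q)^i q^{m-i}$. Likewise the second term becomes $(m-i)N_i(1-q)^{i+1}q^{m-i}$. So $q(1-q)$ does \emph{not} convert differentiation into multiplication by the integer $m-2i$; the correct statement is
\[
q(1-q)\,g_i'(q) \;=\; \bigl[(m-i)(1-q) - iq\bigr]\,g_i(q) \;=\; \bigl[m(1-q) - i\bigr]\,g_i(q),
\]
a $q$-dependent multiplier. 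A one-line check exposes the claimed identity: for $f(q)=1-q$ (so $m=1$, $N_1=1$, $N_0=0$) you have $q(1-q)f'(q)=-q(1-q)$, whereas your formula predicts $(1-2)\cdot(1-q)=q-1$; these disagree at $q=0$.

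The good news is that your argument survives the correction with no further work: for $q\in[0,1]$ and $0\le i\le m$ one has $m(1-q)\in[0,m]$ and $i\in[0,m]$, hence $\bigl|m(1-q)-i\bigr|\le m$, and your triangle-inequality chain then goes through verbatim with $m-2i$ replaced by $m(1-q)-i$. (The sign normalization $N_i\ge 0$, giving $|f|=\sum_i g_i$, is used exactly as you say.) The repaired proof is essentially the paper's: the paper keeps the two pieces $(m-i)(1-q)g_i(q)$ and $-iq\,g_i(q)$ separate, bounds the coefficients $m-i$ and $i$ by $m$ in each sum, and concludes via $m(1-q)|f|+mq|f|=m|f|$. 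You should either present the identity with the correct multiplier $m(1-q)-i$ or keep the two sums separate; as written, the step ``$q(1-q)|f'(q)| = \bigl|\sum_i (m-2i)g_i(q)\bigr|$'' is simply not true.
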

\begin{proof}
We first compute

\begin{equation}\label{relderiv}
f'(q) = \sum_{i=0}^m N_i ((m-i)(1-q)^i q^{m-i-1} - i(1-q)^{i-1}q^{m-i}).
\end{equation}

Then we have
\begin{gather*}
q(1-q)|f'(q)| = \left|\sum_{i=0}^m (m-i)N_i(1-q)^{i+1} q^{m-i} - iN_i(1-q)^{i}q^{m-i+1}\right|\\
\leq m(1-q)\left|\sum_{i=0}^m N_i(1-q)^iq^{m-i}\right| + mq\left|\sum_{i=0}^m N_i(1-q)^iq^{m-i}\right| \leq m|f(q)|.
\end{gather*}

Note that the first inequality, where we bound the coefficients uniformly by $m$, uses the hypothesis that the $N_i$ have the same sign.
\end{proof}

In the form \eqref{reldefn} of a reliability polynomial, the coefficient $N_i$ counts a subset of the subgraphs of $G$ with $i$ edges, and thus we have $0 \leq N_i \leq \binom{m}{i}$.  Then clearly the above theorem applies when $f$ is a reliability polynomial.  However, we can also consider

\begin{equation}
(1-R(G))(q) = \sum_{i=0}^m \left(\binom{m}{i} - N_i\right)(1-q)^i q^{m-i}.
\end{equation}

By the above observation, the coefficients of this polynomial are non-negative, and so \hyperref[derivbound]{Lemma \ref*{derivbound}} applies to $1-R(G)$ as well.  Next we show that it can also be applied to $R(G)'(q)$.

\begin{prop}
Let $f(q) = \sum_{i=0}^m N_i(1-q)^iq^{m-i}$ be a reliability polynomial. Then $f'(q)$ can be written in the same form, and all of the coefficients are non-positive. 
\end{prop}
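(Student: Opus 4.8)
The plan is to differentiate \eqref{reldefn} term by term, regroup the result into the canonical form $\sum_j M_j(1-q)^j q^{(m-1)-j}$, and then show that every coefficient $M_j$ is non-positive by interpreting the inequality $M_j \le 0$ combinatorially.

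First I would take \eqref{relderiv} as the starting point and split it into its two sums. In the first sum the generic term is $(m-i)N_i(1-q)^i q^{(m-1)-i}$, which already has the desired shape with exponents summing to $m-1$; the $i=m$ summand carries the factor $(m-i)=0$, so no negative power of $q$ appears. In the second sum the generic term is $-iN_i(1-q)^{i-1}q^{(m-1)-(i-1)}$, and shifting the index by setting $j=i-1$ brings it to $-(j+1)N_{j+1}(1-q)^j q^{(m-1)-j}$, where the $i=0$ summand drops out because of its factor $i=0$. Collecting the coefficient of $(1-q)^j q^{(m-1)-j}$ for $0 \le j \le m-1$ then gives
\[ M_j = (m-j)N_j - (j+1)N_{j+1}, \]
which exhibits $f'$ in the required form and reduces the proposition to the inequalities $(m-j)N_j \le (j+1)N_{j+1}$ for each $j$.

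The heart of the argument is this combinatorial inequality, and here I would use that $N_i$ genuinely counts connected spanning subgraphs rather than merely being non-negative. I would count a suitable set of pairs in two ways. On one side, consider the pairs $(H,e)$ where $H$ is a connected spanning subgraph of $G$ with exactly $j$ edges and $e$ is an edge of $G$ not in $H$; there are $N_j$ choices for $H$ and $m-j$ choices for $e$, giving $(m-j)N_j$ such pairs. On the other side, consider all pairs $(H',e')$ where $H'$ is a connected spanning subgraph with exactly $j+1$ edges and $e' \in E(H')$; since each such $H'$ has exactly $j+1$ edges, there are precisely $(j+1)N_{j+1}$ of these. The map $(H,e) \mapsto (H+e,\,e)$ sends the first family into the second, because adding an edge to a connected spanning subgraph keeps it connected and spanning while raising the edge count to $j+1$. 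This map is injective, since from $(H+e,e)$ one recovers $e$ and then $H=(H+e)-e$. Hence $(m-j)N_j \le (j+1)N_{j+1}$, so $M_j \le 0$.

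The main obstacle is precisely the inequality $(m-j)N_j \le (j+1)N_{j+1}$: the uniform coefficient bound used in the proof of Proposition~\ref{derivbound} yields only non-negativity of the $N_i$, which is not enough here, so one must exploit their meaning as subgraph counts. Once the injection above is in place the remainder is bookkeeping, so I would spend care only on confirming that $H+e$ is indeed connected and spanning (immediate) and that the boundary summands at $i=0$ and $i=m$ vanish so that no spurious negative exponents appear in the regrouping.
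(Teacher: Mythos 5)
Your proposal is correct and follows essentially the same route as the paper: the same regrouping into coefficients $(m-j)N_j - (j+1)N_{j+1}$, followed by the same double-counting of (subgraph, edge) pairs, with the map $(H,e)\mapsto(H+e,e)$ making explicit the injection that the paper only describes in words. No gaps.
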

\begin{proof}
We already computed the derivative of such a function in \eqref{relderiv}; collecting like terms gives

\begin{equation}\label{relderivcollect}
f'(q) = \sum_{i=0}^{m-1} ((m-i)N_i - (i+1)N_{i+1})(1-q)^iq^{m-i-1}.
\end{equation}

Recall that $N_i$ represents the number of connected spanning subgraphs of $G$ with $i$ edges.  Thus we can think of $(m-i)N_i$ as counting the pairs consisting of a connected spanning subgraph of size $i$ together with a particular edge not in the subgraph.  Similarly, $(i+1)N_{i+1}$ counts the number of pairs consisting of a connected spanning subgraph of size $i+1$ and an edge in the subgraph.  However, since adding an edge to a connected spanning graph gives another connected spanning graph, $(m-i)N_i \leq (i+1) N_{i+1}$. Thus each of the coefficients in the expression for $f'(q)$ is non-positive.
\end{proof}

We note that, by the preceding argument, the coefficient of $(1-q)^i q^{m-i-1}$ in $R(G)'(q)$ counts the number of pairs consisting of a connected spanning subgraph of size $i+1$ and a bridge in the subgraph.  This gives the following result.

\begin{prop}\label{endpointderiv}
Let $f(q)$ be the reliability polynomial of a graph $G$.  If $G$ has no bridges, then $f'(0) = 0$. If $G$ has at least 3 vertices, then $f'(1) = 0$.
\end{prop}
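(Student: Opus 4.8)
The plan is to evaluate the collected derivative formula \eqref{relderivcollect} directly at the two endpoints $q=0$ and $q=1$, exploiting the fact that almost every monomial $(1-q)^i q^{m-i-1}$ vanishes there. Writing $c_i = (m-i)N_i - (i+1)N_{i+1}$ for the coefficients, I would invoke the remark preceding the statement: up to sign, $c_i$ records the number of pairs consisting of a connected spanning subgraph of size $i+1$ together with a bridge of that subgraph, so in particular $c_i = 0$ whenever no such pair exists. Both claims then reduce to identifying the single coefficient that governs each endpoint and interpreting it combinatorially.

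First I would handle $f'(0)$. At $q=0$ the factor $q^{m-i-1}$ kills every term except the one with $i = m-1$, so $f'(0) = c_{m-1}$ (using the convention $0^0 = 1$ for the surviving power of $q$). By the bridge interpretation, $|c_{m-1}|$ counts pairs consisting of a connected spanning subgraph of size $m$ and a bridge in it. The only spanning subgraph using all $m$ edges is $G$ itself, whose bridges are precisely the bridges of $G$; hence $|c_{m-1}|$ equals the number of bridges of $G$. If $G$ has no bridges this count is zero, giving $f'(0) = 0$.

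Next I would treat $f'(1)$. At $q = 1$ the factor $(1-q)^i$ vanishes for every $i \geq 1$, leaving $f'(1) = c_0$. Here $|c_0|$ counts pairs consisting of a connected spanning subgraph of size $1$ and a bridge in it. A spanning subgraph with a single edge can be connected only when $G$ has exactly two vertices, so as soon as $G$ has at least three vertices there are no such subgraphs; hence $c_0 = 0$ and $f'(1) = 0$.

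The argument has no genuine obstacle once \eqref{relderivcollect} and the bridge interpretation are in hand; the only care needed is the endpoint bookkeeping, namely confirming that exactly one term survives in each evaluation and that the surviving coefficient carries the claimed meaning. I would also note at the outset that we may assume $G$ is connected, since otherwise $R(G)$ is identically zero and both claims are immediate; this assumption guarantees that $N_m = 1$, so that the size-$m$ subgraph appearing in the $q=0$ computation is unambiguously $G$ itself.
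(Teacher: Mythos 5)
Your proposal is correct and follows essentially the same route as the paper: evaluate \eqref{relderivcollect} at the endpoints, observe that only the coefficient of $(1-q)^{m-1}$ (resp.\ $q^{m-1}$) survives, and use the bridge-pair interpretation of that coefficient to see it vanishes under the stated hypotheses. The added remark that one may assume $G$ connected is a minor extra care the paper leaves implicit, but the argument is the same.
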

\begin{proof}
From \eqref{relderivcollect}, we see that $f'(0) = 0$ when the coefficient of $(1-q)^{m-1}$ is zero.  Since there is only a single subgraph of size $m$, namely the graph $G$, this is equivalent to $G$ having no bridges.

Similarly, $f'(1) = 0$ whenever the coefficient of $q^{m-1}$ is zero.  In a subgraph with one edge, that edge is a bridge; thus, $f'(1) = 0$ if and only if there are no connected spanning subgraphs with one edge, which is clearly true if the graph $G$ has 3 or more vertices.
\end{proof}

\section{Bounding the Reliability of Complete Graphs}

For the first half of the proof, we will work directly with the reliability polynomials of complete graphs.  We make use of a recurrence relation given in Colbourn's book \cite{Colbourn}, which we restate here.  If $r_n(q)$ is the reliability of the complete graph $K_n$, then

\begin{align*}
r_1 &= 1;\\
r_n &= 1 - \sum_{k=1}^{n-1} \binom{n-1}{k-1}q^{k(n-k)}r_k.
\end{align*}

To make these polynomials easier to work with, we bound them by simpler polynomials.

\begin{lem} Let $\alpha = \frac{1}{8}$, and let $r_n$ denote $R(K_n)$.  For $q \in [0,\alpha]$ and $n \geq 2$, we have
\[1 - (n+1)q^{n-1} \leq r_n(q) \leq 1 - (n-1)q^{n-1}.\]
\end{lem}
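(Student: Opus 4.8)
The plan is to induct on $n$ using Colbourn's recurrence, writing $r_n(q) = 1 - S_n(q)$ with $S_n(q) = \sum_{k=1}^{n-1}\binom{n-1}{k-1}q^{k(n-k)}r_k(q)$. Under this substitution the desired two-sided bound on $r_n$ is exactly equivalent to $(n-1)q^{n-1} \le S_n(q) \le (n+1)q^{n-1}$, so the whole argument reduces to pinning down $S_n$. The base case $n=2$ is immediate, since $r_2(q) = 1-q$ already meets the upper bound exactly and trivially clears the lower bound. For the inductive step I would assume the stated bounds for all $2 \le k < n$ together with $r_1 = 1$.

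The key structural observation is that the two extreme terms $k=1$ and $k=n-1$ of $S_n$ both carry the factor $q^{n-1}$, since their exponents are $1\cdot(n-1)$ and $(n-1)\cdot 1$, whereas every interior term $2 \le k \le n-2$ has exponent $k(n-k) \ge 2(n-2) = 2n-4$, which exceeds $n-1$ once $n \ge 4$. The $k=1$ term equals $q^{n-1}$ and the $k=n-1$ term equals $(n-1)q^{n-1}r_{n-1}(q)$, so together they contribute roughly $nq^{n-1}$, matching the heuristic that isolating a single vertex (an $(n-1)$-edge cut, of which there are $n$) dominates the unreliability; the interior terms are lower-order corrections.

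For the upper bound $r_n \le 1-(n-1)q^{n-1}$, equivalently the lower bound $S_n \ge (n-1)q^{n-1}$, I would keep only these two extreme terms and discard the interior ones, which are non-negative because the inductive hypothesis forces $r_k(q) \ge 1-(k+1)(1/8)^{k-1} > 0$ on $[0,\alpha]$. Applying the inductive bound $r_{n-1}(q)\ge 1 - nq^{n-2}$ yields $S_n \ge nq^{n-1} - n(n-1)q^{2n-3}$, which is at least $(n-1)q^{n-1}$ precisely when $n(n-1)q^{n-2}\le 1$; this holds on $[0,1/8]$ since for $n \ge 3$ the quantity $n(n-1)(1/8)^{n-2}$ is decreasing in $n$ and equals $3/4$ at $n=3$. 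For the lower bound $r_n \ge 1-(n+1)q^{n-1}$, equivalently the upper bound $S_n \le (n+1)q^{n-1}$, I would instead bound every factor by $r_k \le 1$, giving $S_n \le nq^{n-1} + \sum_{k=2}^{n-2}\binom{n-1}{k-1}q^{k(n-k)}$, so it suffices to show the interior sum is at most $q^{n-1}$. Using $q^{k(n-k)} \le q^{2n-4}$ and $\sum_k \binom{n-1}{k-1} \le 2^{n-1}$ bounds the interior sum by $2^{n-1}q^{2n-4}$, and requiring $2^{n-1}q^{n-3}\le 1$ reduces, at $q=1/8=2^{-3}$, to $2^{8-2n}\le 1$, i.e. $n \ge 4$; the interior sum is empty when $n=3$, so that case is covered by the two extreme terms alone.

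I expect the interior sum to be the main obstacle, because it is precisely where the value of $\alpha$ is forced: the estimate $2^{8-2n}\le 1$ is tight exactly at $n=4$ and $q=1/8$. Every other inequality carries comfortable slack, but this one shows that $\alpha = 1/8$ is essentially the largest constant for which this method closes the induction uniformly in $n$. The care needed is therefore in the exponent bookkeeping, namely verifying $k(n-k)\ge 2n-4$ on the interior range and comparing it cleanly to $n-1$, rather than in optimizing constants.
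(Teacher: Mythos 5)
Your proposal is correct and follows essentially the same route as the paper: induction via Colbourn's recurrence, reducing to two-sided bounds on $S_n=\sum_k\binom{n-1}{k-1}q^{k(n-k)}r_k$, isolating the $k=1$ and $k=n-1$ terms, and controlling the interior terms via $k(n-k)\ge 2n-4$ and $2^{n-1}q^{n-3}\le 1$. Your handling of $n=3$ (observing the interior sum is empty) is a slightly cleaner substitute for the paper's direct verification of $r_3$, but the argument is otherwise the same.
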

\begin{proof}
We proceed by induction on $n$.
The base case where $n=2$ is clear, since $r_2 = 1-q$.
Now, suppose the claim is true for $2 \leq k \leq n-1$.  We first note that, since $r_n = 1 - \sum_{k=1}^{n-1} \binom{n-1}{k-1}q^{k(n-k)}r_k$, we can rewrite the inequality we would like to prove as
\[(n-1)q^{n-1} \leq \sum_{k=1}^{n-1} \binom{n-1}{k-1}q^{k(n-k)}r_k \leq (n+1)q^{n-1}.\]
We will first prove the left hand side of the inequality.  From the induction hypothesis, $r_{n-1} \geq 1 - nq^{n-2}$, and the fact that $r_k \geq 0$ for all $k$, we have
\[\sum_{k=1}^{n-1} \binom{n-1}{k-1}q^{k(n-k)}r_k \geq q^{n-1} + (n-1)q^{n-1}(1 - nq^{n-2}).\]
If we can show that $q^{n-1}(1 - n(n-1)q^{n-2}) \geq 0$, it will follow that the right hand side is greater than $(n-1)q^{n-1}$, which was what we wanted.  If we suppose $q \leq \frac{1}{6}$, then it follows that $n(n-1)q^{n-2} \leq 1$ for $n \geq 3$, and the claim holds.

We now proceed to the right hand side of the inequality.  Since $r_k \leq 1$ for all $k$, and $n(n-k) \geq 2n-4$ when $2 \leq k \leq n-2$, we have \begin{align*}
		\sum_{k=1}^{n-1} \binom{n-1}{k-1}q^{k(n-k)}r_k &\leq \sum_{k=1}^{n-1} \binom{n-1}{k-1}q^{k(n-k)}\\
			&\leq nq^{n-1} + \sum_{k=2}^{n-2} \binom{n-1}{k-1}q^{k(n-k)} \\
			&\leq nq^{n-1} + 2^{n-1} q^{2n-4}.
\end{align*}
To show that this is less than or equal to $(n+1)q^{n-1}$, it suffices to show that $2^{n-1}q^{n-3} \leq 1$.  If $n \geq 4$, then it is enough to take $q \leq \frac{1}{8}$.  We may verify directly that
\[
r_3 = 1 - q^2 - 2q^2(1 - q) = 1 - 3q^2 + 2q^3 \leq 1 - 2q^2
\]
provided $q \leq \frac{1}{2}$, which is clearly satisfied since $q \leq \frac{1}{8}$.

This completes our induction proof, and so the bounds hold for all $n \geq 2$.
\end{proof}

We also prove another bound which will be used during the proof of the following theorem.

\begin{lem}\label{roverp2} Let $r_n(q) = R(K_n)(q)$, with $n \geq 3$.  Then for $q \in [0,1]$,
\[\frac{r_n(q)}{(1-q)^2} \leq \frac{1}{2}\binom{n}{2}^2.\]
\end{lem}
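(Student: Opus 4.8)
The plan is to exploit the fact that $(1-q)^2$ divides $r_n(q)$ and then bound the resulting quotient coefficientwise. Writing $m = \binom{n}{2}$ for the number of edges, note first that every connected spanning subgraph of $K_n$ uses at least $n-1 \geq 2$ edges, so in the expansion $r_n(q) = \sum_{i=0}^m N_i(1-q)^iq^{m-i}$ we have $N_0 = N_1 = 0$. Consequently $(1-q)^2$ divides $r_n(q)$ and, for $q \in [0,1]$,
\[\frac{r_n(q)}{(1-q)^2} = \sum_{i=2}^{m} N_i(1-q)^{i-2}q^{m-i},\]
a sum of nonnegative terms. Since $N_i$ counts a subset of the $i$-edge subgraphs of $K_n$, we have $N_i \leq \binom{m}{i}$, so it suffices to bound $F(q) := \sum_{i=2}^m \binom{m}{i}(1-q)^{i-2}q^{m-i}$.

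The key observation is that $F$ has a closed form: multiplying by $(1-q)^2$ and completing the binomial sum gives $(1-q)^2 F(q) = \sum_{i=2}^m\binom{m}{i}(1-q)^iq^{m-i} = 1 - q^m - m(1-q)q^{m-1}$, using $((1-q)+q)^m = 1$ and subtracting the $i=0,1$ terms. I would then claim that $F(q) \leq \binom{m}{2}$ on all of $[0,1]$, which is the crux of the argument. To prove this, set $\phi(q) = \binom{m}{2}(1-q)^2 - \big(1 - q^m - m(1-q)q^{m-1}\big)$, so the claim becomes $\phi(q) \geq 0$. One checks $\phi(1) = 0$, and a direct differentiation shows the cross terms cancel, leaving $\phi'(q) = -m(m-1)(1-q)(1-q^{m-2})$. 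Since both factors $(1-q)$ and $(1-q^{m-2})$ are nonnegative on $[0,1]$, $\phi$ is nonincreasing there, so $\phi(q) \geq \phi(1) = 0$ and hence $F(q) \leq \binom{m}{2}$.

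Combining these steps yields $\frac{r_n(q)}{(1-q)^2} \leq F(q) \leq \binom{m}{2} = \frac{m(m-1)}{2} \leq \frac{1}{2}m^2 = \frac{1}{2}\binom{n}{2}^2$, as desired. The routine parts are the coefficientwise estimate and the binomial simplification; the step I expect to require the most care is establishing $F(q) \leq \binom{m}{2}$, since $F$ is not a single monotone term but a sum of beta-type functions, and it is precisely the clean cancellation producing the factored derivative $\phi'(q) = -m(m-1)(1-q)(1-q^{m-2})$ that makes the monotonicity argument succeed.
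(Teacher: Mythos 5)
Your proof is correct, but the key step is handled by a genuinely different mechanism than the paper's. Both arguments begin identically: since $n \geq 3$ forces $N_0 = N_1 = 0$, the quotient $r_n(q)/(1-q)^2$ is a polynomial bounded termwise by $F(q) = \sum_{i=2}^m \binom{m}{i}(1-q)^{i-2}q^{m-i}$. The paper then disposes of $F$ with a one-line combinatorial identity: $\binom{m}{i} = \frac{m(m-1)}{i(i-1)}\binom{m-2}{i-2} \leq \frac{m^2}{2}\binom{m-2}{i-2}$ for $i \geq 2$, after which the sum telescopes into $\frac{m^2}{2}\big((1-q)+q\big)^{m-2} = \frac{m^2}{2}$. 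You instead evaluate $(1-q)^2F(q)$ in closed form as $1 - q^m - m(1-q)q^{m-1}$ and prove $F \leq \binom{m}{2}$ by showing $\phi(q) = \binom{m}{2}(1-q)^2 - (1-q)^2F(q)$ is nonincreasing with $\phi(1)=0$; I verified your factorization $\phi'(q) = -m(m-1)(1-q)(1-q^{m-2})$, which is indeed nonpositive on $[0,1]$ since $m = \binom{n}{2} \geq 3$. Your route is more computational but yields the marginally sharper intermediate bound $\binom{m}{2} = \frac{m(m-1)}{2}$ rather than $\frac{m^2}{2}$ directly; the paper's reindexing trick is shorter and avoids calculus entirely, and is worth internalizing since the same shift-the-binomial device recurs in reliability estimates. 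One small point you should make explicit: the inequality $F(q) \leq \binom{m}{2}$ is obtained by dividing $\phi(q) \geq 0$ by $(1-q)^2$, which is only legitimate for $q < 1$; since $F$ is a polynomial, the case $q = 1$ follows by continuity (or by noting $F(1) = \binom{m}{2}$ directly, as only the $i = m$ term survives when $m \geq 2$... in fact only the $i=2$ term contributes $\binom{m}{2}(1-q)^0q^{m-2}$ at $q=1$, giving exactly $\binom{m}{2}$, so the bound is tight there).
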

\begin{proof}
Let $m = \binom{n}{2}$ denote the number of edges of $K_n$.  For $2 \leq i \leq m$, we have
\[\binom{m}{i} = \frac{m(m-1)}{i(i-1)}\binom{m-2}{i-2} \leq \frac{m^2}{2}\binom{m-2}{i-2}.\]
Recall that we can write $r_n(q) = \sum_{i=0}^{m} N_i (1-q)^iq^{m-i}$.  Since $N_i$ is the number of connected spanning subgraphs of $K_n$ of size $i$, we have $0 \leq N_i \leq \binom{m}{i}$ for all $i$, and $N_i = 0$ for $0 \leq i < n-1$.  Since $n \geq 3$, we can say
\begin{gather*}
\frac{r_n(q)}{(1-q)^2} = \sum_{i=0}^{m-2} N_{i+2}(1-q)^iq^{m-i-2} \leq \sum_{i=0}^{m-2} \binom{m}{i+2}(1-q)^iq^{m-i-2}\\
\leq \frac{m^2}{2} \sum_{i=0}^{m-2} \binom{m-2}{i}(1-q)^iq^{m-2-i} = \frac{m^2}{2} = \frac{1}{2}\binom{n}{2}^2.\qedhere
\end{gather*}
\end{proof}

Now we have enough in place to prove our first theorem.

\begin{thm} Let $0 < a < b < \frac{1}{8}$, and let $\epsilon > 0$.  Then there exists a graph $G$ such that $|R(G)''(q)| \leq \epsilon$ when $q$ in $[0,1] \setminus [a,b]$.
\end{thm}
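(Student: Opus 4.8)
The plan is to take $G$ to be the one-point union of $j$ copies of the complete graph $K_n$, so that $R(G) = r_n^j$ by the product rule for one-point unions, and then to choose $n$ and $j$ so that the transition of $r_n^j$ from near $1$ to near $0$ is squeezed inside $(a,b)$. The heuristic is that $r_n(q)\approx 1-(n-1)q^{n-1}$ for small $q$, so $r_n^j\approx\exp(-j(n-1)q^{n-1})$ passes through its transition near the point $q_*$ where $j(n-1)q_*^{n-1}\approx 1$, and the width of this transition in $q$ is of order $q_*/(n-1)$. Thus by taking $n$ large the transition becomes arbitrarily sharp and can be localized in $(a,b)$, after which $|R(G)''|$ should be small on both complementary intervals. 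Concretely I would fix $q_* = \sqrt{ab}\in(a,b)$ and set $j = \lfloor q_*^{-(n-1)}\rfloor$, letting $n\to\infty$.

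First I would record, for a single complete graph, the twofold derivative bounds coming from Proposition \ref{derivbound}. Applying it to $r_n$ and to $1-r_n$ gives $q(1-q)|r_n'|\le m_0\min(r_n,1-r_n)$ with $m_0=\binom{n}{2}$; applying it again to $r_n'$ (which has the reliability-derivative form) gives $q(1-q)|r_n''|\le m_0|r_n'|$, hence $|r_n''|\le m_0^2\min(r_n,1-r_n)/(q^2(1-q)^2)$. Writing $f=r_n^j$ and differentiating by the product rule,
\[ f'' = j(j-1)r_n^{j-2}(r_n')^2 + j\,r_n^{j-1}r_n'', \]
I can estimate $f''$ on each interval by inserting the single-graph bounds. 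The point of routing through the product rule, rather than treating $R(G)$ as one reliability polynomial with $jm_0$ edges, is that it keeps the power of $j$ in the estimates as low as possible; the naive monolithic bound produces a factor $j^3$ on $[0,a]$ and genuinely fails when $b<a^{1/3}$, whereas the product rule produces only $j$ and $j^2$, which is what makes the two competing requirements compatible.

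On $[0,a]$ I would use $\min(r_n,1-r_n)\le 1-r_n\le(n+1)q^{n-1}$ from the section's first lemma (valid since $a<\tfrac18$) together with $r_n^{j-1},r_n^{j-2}\le1$, obtaining a bound of the form $j\,(\text{poly in }n)\,a^{n-3}+j^2\,(\text{poly in }n)\,a^{2n-4}$. On $[b,1]$ I would instead use $\min(r_n,1-r_n)\le r_n$, the estimate $r_n/(1-q)^2\le\tfrac12\binom{n}{2}^2$ from Lemma \ref{roverp2}, and the monotonicity $r_n(q)\le r_n(b)$ for $q\ge b$ (since $R(G)'<0$ on $(0,1)$); after the $(1-q)^2$ in the denominator is absorbed, the two product-rule terms combine into the clean bound $\tfrac{m_0^4 j^2}{2b^2}\,r_n(b)^{j-1}$. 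Using $r_n(b)\le 1-(n-1)b^{n-1}$ this is at most $\tfrac{m_0^4 j^2}{2b^2}\exp(-(n-1)b^{n-1}(j-1))$.

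Finally I would insert $j=\lfloor q_*^{-(n-1)}\rfloor$ with $q_*=\sqrt{ab}$ and check both bounds tend to $0$ as $n\to\infty$. Since $a<q_*$, the $[0,a]$ bound is at most a polynomial in $n$ times $(a/q_*)^{n-1}=(a/b)^{(n-1)/2}$, which decays geometrically. Since $q_*<b$, the exponent in the $[b,1]$ bound satisfies $(n-1)b^{n-1}(j-1)\gtrsim(n-1)(b/q_*)^{n-1}=(n-1)(b/a)^{(n-1)/2}\to\infty$, so that bound decays doubly exponentially and overwhelms the single-exponential factor $j^2$. Hence for all sufficiently large $n$ both bounds fall below $\epsilon$, and the corresponding $G$ works. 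The main obstacle is exactly this balancing act: the $[0,a]$ estimate forces $j$ not to be too large while the $[b,1]$ estimate forces it to be large, and the argument only closes because choosing $q_*$ strictly between $a$ and $b$ makes both $a/q_*$ and $q_*/b$ strictly less than $1$, leaving room for every $0<a<b<\tfrac18$ no matter how close $a$ and $b$ are.
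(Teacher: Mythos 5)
Your proposal is correct and follows essentially the same route as the paper: $G = K_n^{j}$, the product-rule expansion of $(r_n^{j})''$, Proposition \ref{derivbound} applied to $r_n$, $1-r_n$, and $r_n'$, the sandwich bounds $1-(n+1)q^{n-1}\le r_n \le 1-(n-1)q^{n-1}$ on $[0,1/8]$, and Lemma \ref{roverp2} near $q=1$, with $j \approx q_*^{-(n-1)}$ for a point $q_*$ strictly between $a$ and $b$. Your execution is in fact slightly cleaner in two spots --- bounding the factors $(1-(n-1)q^{n-1})^{j-2}$ by $1$ on $[0,a]$ lets you evaluate the remaining monotone terms at $a$ directly instead of reproducing the paper's sign analysis of $(f+g)'$, and running Lemma \ref{roverp2} together with $r_n(q)\le r_n(b)$ over all of $[b,1]$ avoids the paper's split at $1/8$ --- but the skeleton is identical.
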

\begin{proof}
We first recall that $r_n$ denotes the reliability of the complete graph $K_n$ and the inequalities 
\[1-(n+1)q^{n-1} \leq r_n(q) \leq 1-(n-1)q^{n-1}\]
are valid for $q \in [0,1/8]$.  Since we are only interested in upper bounds, we reformulate the left hand side as $1 - r_n(q) \leq (n+1)q^{n-1}$.

We compute
\[(r_n^\ell)''(q) = \ell(\ell-1)r_n^{\ell-2}(q)r_n'^2(q) + \ell r_n^{\ell-1}(q)r_n''(q),\]

from which we now obtain various bounds. Note that the coefficients of $1-r_n(q)$  are all non-negative. From our lemmas, we have
\begin{align*}
    &q(1-q)|r_n'(q)| \leq \binom{n}{2}|r_n(q)|;\\
&q(1-q)|r_n'(q)| = q(1-q)|(1-r_n)'(q)| \leq \binom{n}{2}|1-r_n(q)|;\\
&q(1-q)|r_n''(q)| \leq \binom{n}{2}|r_n'(q)|.
\end{align*}
For $q \in [0,1/8]$, we have
\begin{align*}
q^2(1-q)^2|(r_n^{\ell})''(q)| &\leq \ell(\ell-1)\frac{n^2(n-1)^2}{4}|r_n^{\ell-2}(q)||1-r_n(q)|^2\\
&\quad + \ell \frac{n^2(n-1)^2}{4}|r_n^{\ell-1}(q)||1-r_n(q)|\\
&\leq \frac{\ell^2n^6}{4}(1-(n-1)q^{n-1})^{\ell-2}q^{2n-2}\\
&\quad + \frac{\ell n^5}{4}(1-(n-1)q^{n-1})^{\ell-1}q^{n-1},
\end{align*}
where we have, in addition to using the lemmas, simplified $(n-1)(n+1) \leq n^2$.  We note that $(1-q)^2 \geq \frac{1}{4}$ when $q \leq \frac{1}{8}$, and so we may rearrange this inequality to obtain
\begin{align*}
|(r_n^{\ell})''(q)| &\leq \ell^2n^6(1-(n-1)q^{n-1})^{\ell-2}q^{2n-4}\\
&\quad +\ell n^5(1-(n-1)q^{n-1})^{\ell-1}q^{n-3}.
\end{align*}

We denote the two terms in the above bound by $f(q)$ and $g(q)$, respectively.  Note that $f$ and $g$ are both dependent on our choice of $\ell$ and $n$; from here on it will be useful to assume that $n \geq 4$.

We'd like to demonstrate that $f(q)+g(q) < \epsilon$ on $[0,a]$ and $[b,1/8]$.  To do this, we first show that $f+g$ is increasing on the first interval and decreasing on the second; then it suffices to show that $f(a)+g(a) < \epsilon$ and $f(b)+g(b) < \epsilon$.

Taking the derivative of $f$ and factoring, we see that the sign of $f'(q)$ is the same as that of the expression
\[-(\ell-2)(n-1)^2q^{n-1} + (2n-4)(1-(n-1)q^{n-1}).\]
This expression is non-negative precisely when
\[((\ell-2)(n-1)^2 + (2n-4)(n-1))q^{n-1} \leq 2n-4.\]
After some estimation, we see that if $\ell n(n-1)q^{n-1} \leq 2(n-2)$, then $f'(q) \geq 0$.  Since $n \geq 4$, it suffices to show $\ell nq^{n-1} \leq 1$.

The sign of $g'(q)$ is the same as that of
\[-(\ell-1)(n-1)^2q^{n-1} + (n-3)(1-(n-1)q^{n-1});\]
similarly, if $\ell n(n-1)q^{n-1} \leq n-3$, then $g'(q) \geq 0$.  Since $n \geq 4$, it suffices to show that $\ell nq^{n-1} \leq \frac{1}{4}$; if this is true, then the condition above holds as well, and so $(f+g)'(q) \geq 0$.  Moreover, since $\ell nq^{n-1} \leq \ell na^{n-1}$ for $q \in [0,a]$, it is sufficient to show that $\ell na^{n-1} \leq \frac{1}{4}$.

By the same sort of approximation, we see that if $\ell q^{n-1} \geq 1$, then $(f+~g)'(q) \leq 0$.  Again, if this is true at $b$, then it clearly holds for all $q \in [b,1/8]$ as well.

For $q \in [1/8,1]$, we have $\frac{1}{q^2} \leq 64$. Using our usual bounds, along with \hyperref[roverp2]{Lemma \ref*{roverp2}}, we see
\begin{align*}
|(r_n^{\ell})''(q)| &\leq \frac{1}{q^2} \ell^2 \binom{n}{2}^2 r_n^{\ell-1}(q) \frac{r_n(q)}{(1-q)^2}\\
&\leq 16\ell^2 \binom{n}{2}^4 r_n^{\ell-1}(q) \leq \ell^2n^8 r_n^{\ell-1}(q)\\
&\leq \ell^2n^8 r_n^{\ell-1}(1/8) \leq \ell^2n^8 (1-(n-1)(1/8)^{n-1})^{\ell-1}.
\end{align*}

Note that we were able to replace $q$ with $1/8$ since $q \in [1/8,1]$ and $r_n$ is decreasing.

We restate our conditions here: we want to find $\ell \geq 1$ and $n \geq 4$ such that
\begin{itemize}
\item $\ell na^{n-1} \leq \frac{1}{4}$,
\item $\ell b^{n-1} \geq 1$,
\item $f(a)+g(a) \leq \epsilon$,
\item $f(b)+g(b) \leq \epsilon$, and 
\item $\ell^2n^8 (1-(n-1)(1/8)^{n-1})^{\ell-1} \leq \epsilon$.
\end{itemize}

If we can find $\ell,n$ satisfying these conditions, then the arguments above show that $|R(K_n^{\ell})''(q)| \leq \epsilon$ for $q \in [0,1] \setminus [a,b]$.

To show that all of the above inequalities can be satisfied, we define a sequence of $\ell_i$ and $n_i$ such that the quantities on the left become arbitrarily small (or large, in the case of the second one) for sufficiently large $i$.  We begin by choosing positive integers $N,k$ such that $a < N^{-1/k} < b$; this is possible because there is an integer between $b^{-k}$ and $a^{-k}$ for sufficiently large $k$.  Then, we let $\ell_i = N^i$ and $n_i = ik$.

Consider the first expression: we rewrite
\[\ell_i n_i a^{n_i-1} = \frac{ik}{a}(Na^k)^i,\]
which becomes small as $i$ goes to infinity, by an application of l'Hospital's rule and the fact that $Na^k < 1$.  Similarly, we can write
\[\ell_i b^{n_i-1} \geq (Nb^k)^i,\]
which tends to infinity since $Nb^k > 1$.

For the next two inequalities, we analyze $f$ and $g$ separately.  First, we have
\begin{align*}
\log(f(q)) &= 2\log(\ell_i) + 6\log(n_i)  \\
&\quad +  (\ell_i-2)\log(1-(n_i-1)q^{n_i-1}) + (2n_i-4)\log(q).
\end{align*}
Note that $\log(1 - (n_i-1)q^{n_i-1}) \leq -(n_i-1)q^{n_i-1}$; since we'd like to show that $\log(f(q)) \to -\infty$ as $i \to \infty$, we may make this replacement.  Thus we must show that the expression
\begin{align*}
2i\log(N) + 6\log(ik) - (N^i-2)(ik-1)q^{ik-1} + 2i\log(q^k) - 4\log(q)
\end{align*}
becomes arbitrarily small as $i$ tends to infinity.  The term $4\log(q)$ is constant with respect to $i$, and so after some rearrangement, we need only consider
\begin{align*}
&2i\log(N) + 6\log(ik) - (N^i-2)(ik-1)q^{ik-1} + 2i\log(q^k)\\
&\leq 2i\log(Nq^k) + 6\log(ik) - N^{-1}(Nq^k)^i
\end{align*}
For $q=a$, the first term tends to negative infinity, and dominates the second, while the third term is also negative; thus $\log(f(a))$ can be made arbitrarily small, and $f(a) < \frac{\epsilon}{2}$ for sufficiently large $i$.  For $q=b$, we have $Nq^k > 1$, and so the third term dominates the first two terms, and again we can make $f(b)$ arbitrarily small.

Proceeding similarly, we collect the nonconstant terms of $\log(g(q))$, and make the same upward approximation as before :
\begin{align*}
\log(\ell) &+ 5\log(n) - (\ell-1)(n-1)q^{n-1} + n\log(q)\\
&= i\log(Nq^k) + 5\log(ik) - (N^i-1)(ik-1)q^{ik-1}\\
&\leq i\log(Nq^k) + 5\log(ik) - N^{-1}(Nq^k)^i.
\end{align*}
The arguments showing that $g(a)$ and $g(b)$ become arbitrarily small for sufficiently large $i$ are identical to those given above for $f$.

Finally, we consider the expression in the fifth inequality.  We have
\begin{align*}
\log(\ell^2&n^8 (1-(n-1)(\tfrac{1}{8})^{n-1})^{\ell-1})\\
&\leq 2i\log(N) + 8\log(ik) -(N^i-1)(ik-1)(\tfrac{1}{8})^{ik-1}\\
&\leq 2i\log(N) + 8\log(ik) -N^{-1}(N(\tfrac{1}{8})^k)^i.
\end{align*}

Since $N(\frac{1}{8})^k > 1$, the last term dominates the others, and evidently this can also be made arbitrarily small.

Thus, if we take $i$ sufficiently large, we see that $G = K_{n_i}^{\ell_i}$ satisfies the desired condition.
\end{proof}

\section{Main Result}

Now we proceed to the proof that there are reliability polynomials with arbitrarily many inflection points.  We choose a collection of intervals 
\[I_{k,m} = (a_{k,m},b_{k,m}) \subset [0,1],\]
for $k \geq 0$ and $m \geq 1$, such that

\begin{itemize}
\item $0 < a_{0,1} < b_{0,1} < a_{1,1} < b_{1,1} < a_{2,1} < \cdots$, and $b_{k,1} < \frac{1}{8}$ for all $k \geq 0$;
\item $I_{k,m+1} \subset I_{k,m}$;
\item $\ell(I_{k,m}) = b_{k,m} - a_{k,m} \leq 2^{-m}$.
\end{itemize}

For $n \geq 3$, $m \geq 1$, $K_n^m$ has at least three vertices and no bridges.  By our previous theorem, along with \hyperref[endpointderiv]{Lemma \ref*{endpointderiv}}, we can find a collection of reliability polynomials $s_{k,m}: [0,1] \to [0,1]$, satisfying the following properties.

\begin{itemize}
\item $s_{k,m}(0) = 1$ and $s_{k,m}(1) = 0$;
\item $s_{k,m}'(0) = s_{k,m}'(1) = 0$, and $s_{k,m}'(q) \leq 0$ for all $q$;
\item $|s_{k,m}''(q)| \leq 2^{-m-1}$ for $q \notin I_{k,m}$.
\end{itemize}

We now collect the consequences as a series of lemmas.

\begin{lem} If $q \leq a_{k,m}$, then $|s_{k,m}'(q)| \leq 2^{-m-1}$ and $|1 - s_{k,m}(q)| \leq 2^{-m-1}$.  If $q \geq b_{k,m}$, then $|s_{k,m}'(q)| \leq 2^{-m-1}$ and $|s_{k,m}(q)| \leq 2^{-m-1}$.
\end{lem}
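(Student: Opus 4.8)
The plan is to derive both pairs of inequalities from two successive applications of the fundamental theorem of calculus, exploiting the fact that on the relevant integration intervals we stay outside the open interval $I_{k,m}$, so that the hypothesis $|s_{k,m}''(q)| \leq 2^{-m-1}$ applies throughout. For brevity I would abbreviate $s = s_{k,m}$, $a = a_{k,m}$, $b = b_{k,m}$.

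First I would treat the case $q \leq a$. Since $[0,q] \subseteq [0,a]$ lies in $[0,1] \setminus I_{k,m}$, the bound $|s''| \leq 2^{-m-1}$ holds on all of $[0,q]$. Using $s'(0) = 0$, I would write $s'(q) = \int_0^q s''(t)\,dt$, so that $|s'(q)| \leq 2^{-m-1} q \leq 2^{-m-1}$ since $q \leq 1$. Feeding this estimate back in, and using $s(0) = 1$, I would write $1 - s(q) = -\int_0^q s'(t)\,dt$, which gives $|1 - s(q)| \leq \int_0^q |s'(t)|\,dt \leq 2^{-m-1} q \leq 2^{-m-1}$, as required.

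The case $q \geq b$ is entirely symmetric, integrating inward from the right endpoint. Here $[q,1] \subseteq [b,1]$ again avoids $I_{k,m}$, so $|s''| \leq 2^{-m-1}$ on $[q,1]$. Using $s'(1) = 0$, I would write $s'(q) = -\int_q^1 s''(t)\,dt$, giving $|s'(q)| \leq 2^{-m-1}(1-q) \leq 2^{-m-1}$; and using $s(1) = 0$, I would write $s(q) = -\int_q^1 s'(t)\,dt$, giving $|s(q)| \leq 2^{-m-1}(1-q) \leq 2^{-m-1}$.

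There is no serious obstacle here; the one point requiring care is checking that each integration interval lies within $[0,1] \setminus I_{k,m}$, since that is precisely what licenses the use of the $s''$ bound on the whole interval of integration. The monotonicity hypothesis $s' \leq 0$ is not actually needed to obtain these inequalities, although it is consistent with the signs appearing above.
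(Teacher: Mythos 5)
Your proof is correct and rests on the same idea as the paper's: propagate the bound $|s_{k,m}''|\leq 2^{-m-1}$ from the endpoint where $s_{k,m}'$ vanishes (and where $s_{k,m}$ equals $1$ or $0$), using that the relevant interval has length at most $1$ and avoids $I_{k,m}$. The paper phrases this as a mean value theorem argument by contradiction while you integrate directly via the fundamental theorem of calculus, but these are the same estimate in different packaging.
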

\begin{proof} Consider the first statement; we suppose otherwise and apply the mean value theorem.  That is, we suppose there is a $c \in [0,a_{k,m}]$ such that $|s_{k,m}'(c)| \geq 2^{-m-1}$.  But this implies that there is a $d \in [0,c]$ with
\[|s_{k,m}''(d)| = \frac{|s_{k,m}'(c) - s_{k,m}'(0)|}{|c - 0|} > 2^{-m-1},\]
contradicting our assumption about the $s_{k,m}$.

Similarly applying the mean value theorem again shows that if there was a $c \in [0,a_{k,m}]$ with $|s_{k,m}(c) - 1| \geq 2^{-m-1}$ there  would be a $d$ at which the above bound is not satisfied, another contradiction.

The arguments for $q \in [b_{k,m},1]$ are similar.
\end{proof}

Now we'd like to show how {\em large} the derivatives must be on the intervals $I_{k,m}$.  In particular, we'd like to find a single point in each interval at which both the first and second derivatives are ``sufficiently large".

\begin{lem} In each $I_{k,m}$, there is a point $q_{k,m}$ satisfying $s_{k,m}'(q_{k,m}) < -2^{m - 2}$ and $s_{k,m}''(q_{k,m}) > 2^{2m - 2}$.
\end{lem}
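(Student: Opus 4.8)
The plan is to pin down the point by two applications of the mean value theorem: the first to force $s_{k,m}'$ to be very negative somewhere, the second to force $s_{k,m}''$ to be very large somewhere, with the crucial wrinkle that both conclusions must land at a \emph{single} point. Write $s = s_{k,m}$, $a = a_{k,m}$, $b = b_{k,m}$, and recall from the previous lemma that $s(a) \ge 1 - 2^{-m-1}$, $s(b) \le 2^{-m-1}$, and $|s'(a)|,|s'(b)| \le 2^{-m-1}$, while $b - a \le 2^{-m}$.

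First I would locate a point where $s'$ is very negative. Across $[a,b]$, an interval of length at most $2^{-m}$, the function $s$ drops by at least $(1-2^{-m-1}) - 2^{-m-1} = 1 - 2^{-m}$, so the mean value theorem supplies a $c \in (a,b)$ with
\[
s'(c) = \frac{s(b) - s(a)}{b-a} \le \frac{2^{-m} - 1}{2^{-m}} = -(2^m - 1),
\]
and $-(2^m-1) < -2^{m-2}$ for every $m \ge 1$. Thus the first-derivative condition already holds at $c$, with room to spare.

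The key step is to find a point where $s''$ is large \emph{while $s'$ is still very negative}. Since $s'(c) < -2^{m-2}$ and $s'(b) \ge -2^{-m-1} > -2^{m-2}$, the intermediate value theorem gives a first crossing $c^{*} \in (c,b)$ with $s'(c^{*}) = -2^{m-2}$, chosen so that $s'(q) < -2^{m-2}$ throughout $[c, c^{*})$. Applying the mean value theorem to $s'$ on $[c, c^{*}]$ produces $d \in (c, c^{*})$ with
\[
s''(d) = \frac{s'(c^{*}) - s'(c)}{c^{*} - c} \ge \frac{(2^m - 1) - 2^{m-2}}{2^{-m}} = 3\cdot 2^{2m-2} - 2^m .
\]
Because $d$ lies in the open interval $(c, c^{*})$, we still have $s'(d) < -2^{m-2}$, so taking $q_{k,m} := d$ gives both inequalities at once. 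This confinement to $[c,c^{*}]$ is precisely what resolves the main obstacle: a crude averaging argument shows only that $s''$ is large somewhere on $[c,b]$ and that $s'$ is very negative somewhere, but not that these coincide; cutting off at the level set $c^{*}$ guarantees $s'$ has not yet climbed back while still capturing enough of the rise in $s'$ to force $s''$ up.

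It remains to check the second-derivative bound. The quantity $3\cdot 2^{2m-2} - 2^m$ exceeds $2^{2m-2}$ exactly when $2^{2m-1} > 2^m$, i.e.\ for all $m \ge 2$, and at $m=1$ the two sides are equal. The borderline case $m=1$ is recovered by noting that the endpoint estimates inherited from the previous lemma are in fact strict: integrating $|s''| \le 2^{-m-1}$ twice over $[0,a]$ and $[b,1]$, where that bound is valid, yields $1 - s(a)$ and $s(b)$ strictly below $2^{-m-1}$, so the drop across $[a,b]$ strictly exceeds $1 - 2^{-m}$ and every inequality above becomes strict. Hence $q_{k,m}$ satisfies $s_{k,m}'(q_{k,m}) < -2^{m-2}$ and $s_{k,m}''(q_{k,m}) > 2^{2m-2}$.
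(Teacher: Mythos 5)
Your proof is correct and follows essentially the same strategy as the paper's: use the mean value theorem on $s_{k,m}$ over $I_{k,m}$ to find a point where $s_{k,m}'$ is very negative, then cut off at the first point where $s_{k,m}'$ returns to the level $-2^{m-2}$ and apply the mean value theorem to $s_{k,m}'$ on that subinterval, so that the large second derivative and the very negative first derivative land at the same point. In fact you are somewhat more careful than the paper about obtaining the strict inequalities (the paper's own argument, starting from the drop of $\tfrac{1}{2}$, literally yields only $s_{k,m}''(q_{k,m}) \geq 2^{2m-2}$ and $s_{k,m}'(q_{k,m}) \leq -2^{m-2}$), and your sharper drop estimate together with the $m=1$ patch closes that small gap.
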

\begin{proof}
We note that for every $k \geq 0$, $m \geq 1$, we have $s_{k,m}(a_{k,m}) \geq \frac{3}{4}$ and $s_{k,m}(b_{k,m}) \leq \frac{1}{4}$; in particular, the difference is at least $\frac{1}{2}$.  Then by the mean value theorem, there is a $c \in I_{k,m}$ where

\[|s_{k,m}'(c)| = \frac{|s_{k,m}(b_{k,m})-s_{k,m}(a_{k,m})|}{|b_{k,m}-a_{k,m}|} \geq \frac{2^{-1}}{2^{-m}} = 2^{m - 1}.\]

Note that since $s_{k,m}'(q) \leq 0$ for all $q$, we have $s_{k,m}'(c) = -2^{m-1}$.  Now let $d$ be the smallest number such that $d > c$ and $s_{k,m}'(d) = -2^{m - 2}$; applying the mean value theorem again shows that there is a $q_{k,m} \in [c,d] \subset I_{k,m}$ satisfying

\[s_{k,m}''(q_{k,m}) = \frac{s_{k,m}'(d) - s_{k,m}'(c)}{d - c} \geq \frac{2^{m-2}}{2^{-m}} = 2^{2m - 2}.\]

Note that by the definition of $d$, we must have $s_{k,m}'(q_{k,m}) \leq s_{k,m}'(d) = -2^{m - 2}$.  
\end{proof}

Now we show that there exist products of $s_{k,m}$ (that is, reliability functions of one point unions of complete graphs) with arbitrarily many inflection points.  This proof is modeled after the proof of the main result given in \cite{GM}.

\begin{thm} There exist functions $g_k : [0,1] \to [0,1]$ for $k \geq 0$, such that $g_k = s_{k,m_k} g_{k-1}$ for $k \geq 1$, and $g_k$ has at least $2k$ inflection points.
\end{thm}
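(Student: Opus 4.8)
The plan is to prove this by induction on $k$, choosing the indices $m_k$ one at a time, each so large that the new factor $s_{k,m_k}$ barely perturbs the structure already accumulated to the left of its drop interval while forcing two new sign changes of the second derivative inside $I_{k,m_k}$. Since each $g_k$ is a product of reliability polynomials it satisfies $g_k(q) \geq (1-q)^{\deg} > 0$ on $[0,1)$, and an inflection point is simply a point where $g_k''$ changes sign; so it suffices to exhibit, for each $k$, a chain of points $w_0 < w_1 < \cdots < w_{2k+1}$ lying in $(0,b_{k,m_k})$ at which $g_k''$ takes alternating signs, beginning negative at $w_0$ and ending positive at $w_{2k+1}$. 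Between consecutive $w_i$ the intermediate value theorem then places a genuine sign change, giving $2k+1 \geq 2k$ inflection points. I would carry exactly this statement, \emph{with the final sign positive}, as the induction hypothesis.

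The key realization is that, rather than estimating $s_{k,m_k}''$ directly on the interior of $I_{k,m_k}$ — whose magnitude is governed by the hidden parameters $n,\ell$ of the underlying complete-graph power and is essentially uncontrolled — I would read off the sign of $g_k''$ from the monotonicity of $g_k'$ via the mean value theorem, bracketing the drop interval by its endpoints $a_{k,m_k}$ and $b_{k,m_k}$. Writing $g_k = s_{k,m_k}\,g_{k-1}$ and $g_k' = s_{k,m_k}'\,g_{k-1} + s_{k,m_k}\,g_{k-1}'$, I would establish three facts for $m_k$ large: (i) $g_k'(a_{k,m_k})$ is bounded independently of $m_k$, since $|s_{k,m_k}'(a_{k,m_k})| \leq 2^{-m_k-1}$ and $g_{k-1},g_{k-1}'$ are already fixed; (ii) $g_k'(q_{k,m_k}) \leq -2^{m_k-2}c_k + \|g_{k-1}'\|_\infty$ is very large negative, because $s_{k,m_k}'(q_{k,m_k}) < -2^{m_k-2}$ while $g_{k-1}(q_{k,m_k}) \geq c_k := \min_{\overline{I_{k,1}}} g_{k-1} > 0$; and (iii) $g_k'(b_{k,m_k})$ is nearly zero, since both $|s_{k,m_k}'|$ and $|s_{k,m_k}|$ are at most $2^{-m_k-1}$ at $b_{k,m_k}$. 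Comparing (i) with (ii) and applying the mean value theorem yields a point $p_k \in (a_{k,m_k},q_{k,m_k})$ with $g_k''(p_k) < 0$; comparing (ii) with (iii) yields $r_k \in (q_{k,m_k},b_{k,m_k})$ with $g_k''(r_k) > 0$. These are the two new witnesses $w_{2k} = p_k$ and $w_{2k+1} = r_k$, both inside $I_{k,m_k}$ and hence to the right of every earlier witness.

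To close the inductive step I would check that multiplying by $s_{k,m_k}$ preserves the old chain. The previous witnesses all lie in $(0,b_{k-1,m_{k-1}})$, and the interval ordering guarantees $b_{k-1,m_{k-1}} \leq b_{k-1,1} < a_{k,1} \leq a_{k,m_k}$ no matter which indices are chosen, so they lie in $[0,a_{k,m_k}]$. On this set the first consequence lemma gives $|1-s_{k,m_k}|,|s_{k,m_k}'| \leq 2^{-m_k-1}$, and the defining bound gives $|s_{k,m_k}''| \leq 2^{-m_k-1}$ (as $[0,a_{k,m_k}]$ is disjoint from $I_{k,m_k}$); hence $|g_k'' - g_{k-1}''| \leq 2^{-m_k-1}\big(\|g_{k-1}\|_\infty + 2\|g_{k-1}'\|_\infty + \|g_{k-1}''\|_\infty\big)$ there. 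Because $g_{k-1}$ is fixed before $m_k$ is chosen, taking $m_k$ above a threshold determined by $\min_i |g_{k-1}''(w_i)| > 0$ forces $g_k''$ to match the sign of $g_{k-1}''$ at each old witness. Appending $p_k$ and $r_k$ continues the alternation and keeps the terminal sign positive, so the hypothesis survives; the base case $g_0 = s_{0,m_0}$ is the identical argument run with $g_{-1}\equiv 1$, producing $w_0 = p_0,\ w_1 = r_0$.

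The main obstacle, and the point I would watch most closely, is precisely this clash of magnitudes at the new factor: inside $I_{k,m_k}$ the derivatives of $s_{k,m_k}$ can be astronomically large and are not bounded by any of our lemmas, so any direct product-rule estimate of $g_k''$ on the drop interval is hopeless. The entire argument hinges on never evaluating those interior derivatives — using $s_{k,m_k}$ and $s_{k,m_k}'$ only at $a_{k,m_k}$ and $b_{k,m_k}$ (where they are tiny) and invoking only the one-sided bound $s_{k,m_k}'(q_{k,m_k}) < -2^{m_k-2}$ at the single point $q_{k,m_k}$ — and letting the mean value theorem manufacture the interior sign information for free. The only remaining care is the bookkeeping: confirming $c_k > 0$ (from $\overline{I_{k,1}} \subset (0,1)$ together with positivity of reliability polynomials on $[0,1)$) and that the fixed orderings of the $I_{k,m}$ keep every new drop strictly to the right of all previously created witnesses.
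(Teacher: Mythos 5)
Your argument is correct, and although it shares the paper's overall architecture --- induct on $k$, set $g_k = s_{k,m_k}g_{k-1}$ with $m_k$ so large that the new factor is essentially the constant $1$ with negligible first and second derivatives on $[0,a_{k,m_k}]$ (preserving the old structure), and harvest new inflection points from the steep drop inside $I_{k,m_k}$ --- the mechanism you use to certify the new inflection points is genuinely different. The paper's inductive invariant is that $g_k''(q_i)>0$ at each marked point while $\int_{q_{i-1}}^{q_i}g_k''\,dq<0$; to establish $g_k''(q_{k,m})>0$ it must expand $(s_{k,m}g_{k-1})''$ at the interior point $q_{k,m}$, and for that it needs the second conclusion of the lemma producing $q_{k,m}$, namely $s_{k,m}''(q_{k,m})>2^{2m-2}$, to dominate the $s_{k,m}g_{k-1}''$ term. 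You instead carry an alternating-sign chain of witnesses for $g_k''$ and manufacture the two new ones by applying the mean value theorem to $g_k'$ at the three points $a_{k,m_k}<q_{k,m_k}<b_{k,m_k}$, using only the endpoint smallness of $s_{k,m_k}$ and $s_{k,m_k}'$, the one-sided bound $s_{k,m_k}'(q_{k,m_k})<-2^{m_k-2}$, and $c_k=\min_{\overline{I_{k,1}}}g_{k-1}>0$; the lower bound on $s_{k,m}''$ is never invoked. This buys a somewhat more elementary argument (no second-derivative information about $s_{k,m_k}$ inside the drop interval is ever needed) and in fact delivers $2k+1$ rather than $2k$ inflection points; the price is the extra bookkeeping of the alternating chain, which you carry out correctly via the perturbation bound $|g_k''-g_{k-1}''|\le 2^{-m_k-1}\left(\|g_{k-1}\|_\infty+2\|g_{k-1}'\|_\infty+\|g_{k-1}''\|_\infty\right)$ on $[0,a_{k,m_k}]$ and the observation that $a_{k,m_k}\ge a_{k,1}$ exceeds every previously placed witness. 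The two proofs are close cousins in the end: the paper's integral condition is precisely the statement that $g_k'$ decreases across $[q_{i-1},q_i]$, which is the same first-derivative drop you exploit, just read off at different points.
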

\begin{proof}
We prove this by induction on $k$. We are going to show that there is a sequence of reliability polynomials $g_k$, integers $m_k$, and points $q_k \in I_{k,m_k}$, for $k \geq 0$, such that

\begin{itemize}
\item $g_k''(q_i) > 0$ for $i=0,...,k$;
\item $\int_{q_{i-1}}^{q_i} g_k''(q) dq < 0$ for $i=1,...,k$;
\item $g_k = s_{k,m_k} g_{k-1}$ if $k \geq 1$.
\end{itemize}

We begin with the base case $k=0$.  We simply let $g_0(q) = s_{0,1}$, $q_0 = q_{0,1}$.  By our lemma, $s_{0,1}''(q_{0,1}) > 0$, and so the first condition holds; the other conditions hold vacuously.

Now suppose that we have found a $g_{k-1}$ satisfying the above properties, and we would like to find an $m_k$ such that $g_k = s_{k,m_k} g_{k-1}$ also satisfies them.

We first consider

\begin{equation}
(s_{k,m}g_{k-1})' = s_{k,m}'g_{k-1} + s_{k,m}g_{k-1}'.
\end{equation}

For each $i < k$, $q_i < a_{k,1}$, so $\underset{m \to \infty}{\lim} s_{k,m}'(q_i) = 0$ and $\underset{m \to \infty}{\lim} s_{k,m}(q_i) = 1$.   It follows that
\[\underset{m \to \infty}{\lim} (s_{k,m}g_{k-1})'(q_i) = g_{k-1}'(q_i).\]

Since

\[\int_{q_{i-1}}^{q_i} (s_{k,m}g_{k-1})''(q) dq = (s_{k,m}g_{k-1})'(q_i) - (s_{k,m}g_{k-1})'(q_{i-1}),\]

we have

\[\lim_{m \to \infty} \int_{q_{i-1}}^{q_i} (s_{k,m}g_{k-1})''(q) dq = \int_{q_{i-1}}^{q_i} g_{k-1}''(q) dq < 0.\]

Now we look at $(s_{k,m}g_{k-1})'(q_{k,m})$, noting that the precise location of $q_{k,m}$ depends on $m$.  By construction, we have $s_{k,m}'(q_{k,m}) \leq -2^{m-2}$, but $|s_{k,m}(q_{k,m})| \leq 1$; thus, by taking $m$ sufficiently large, we can guarantee that $(s_{k,m}g_{k-1})'(q_{k,m}) - (s_{k,m}g_{k-1})'(q_{k-1}) < 0$.

Next we need to consider

\begin{equation}
(s_{k,m}g_{k-1})'' = s_{k,m}''g_{k-1} + 2s_{k,m}'g_{k-1}' + s_{k,m}g_{k-1}''.
\end{equation}

For $q < a_{k,1}$, we have $\underset{m \to \infty}{\lim} (s_{k,m}g_{k-1})''(q) = g_{k-1}''(q)$, since $\underset{m \to \infty}{\lim} s_{k,m}''(q) = 0$, $\underset{m \to \infty}{\lim} s_{k,m}'(q) = 0$, and $\underset{m \to \infty}{\lim} s_{k,m}(q) = 1$.  Thus by the induction hypothesis, for sufficiently large $m$ we have $(s_{k,m}g_{k-1})''(q_i) > 0$ for $i=0,...,k-1$.

Now we need only consider $(s_{k,m}g_{k-1})''(q_{k,m})$.  Since $g_{k-1}$ is a reliability polynomial, $g_{k-1}' \leq 0$, so the second term in the expansion of $(s_{k,m}g_{k-1})''$ is positive.  Since $s_{k,m} < 1$, the third term is bounded by the maximum of $|g_{k-1}''|$ on [0,1].  Finally, since $g_{k-1}$ is bounded away from 0 on $I_{k,1}$ and $\underset{m \to \infty}{\lim} s_{k,m}''(q_{k,m}) = \infty$, it follows that $(s_{k,m}g_{k-1})''(q_{k,m}) > 0$ for sufficiently large $m$.

If we let $m_k$ be sufficiently large such that all of the above constructions hold, and define $q_k = q_{k,m_k}$ and $g_k = s_{k,m_k} g_{k-1}$, then $g_k$ satisfies the induction hypothesis; thus we have constructed the desired $g_k$ for all $k \geq 0$.  We know that $g_k''(q_{i-1}) > 0$ and $g_k''(q_i) > 0$ for each $i = 1, \dots ,k$; but $\int_{q_{i-1}}^{q_i} g_k'' dq < 0$ tells us that $g_k''$ is negative somewhere on $[q_{i-1},q_i]$, which implies that there are at least 2 inflection points on this interval.  Thus $g_k$ has at least $2k$ inflection points, which was what we wanted to show.
\end{proof}

Note that the polynomial $g_k$ constructed in the above theorem is the product of the reliability polynomials of a number of complete graphs, and thus it is the reliability polynomial of the one-point union of those graphs. In Figure 1, the second derivatives of three such graphs are shown, demonstrating reliability polynomials of simple graphs having 3,4, and 5 inflection points.

\begin{center}
\begin{figure}
\includegraphics[width=6cm]{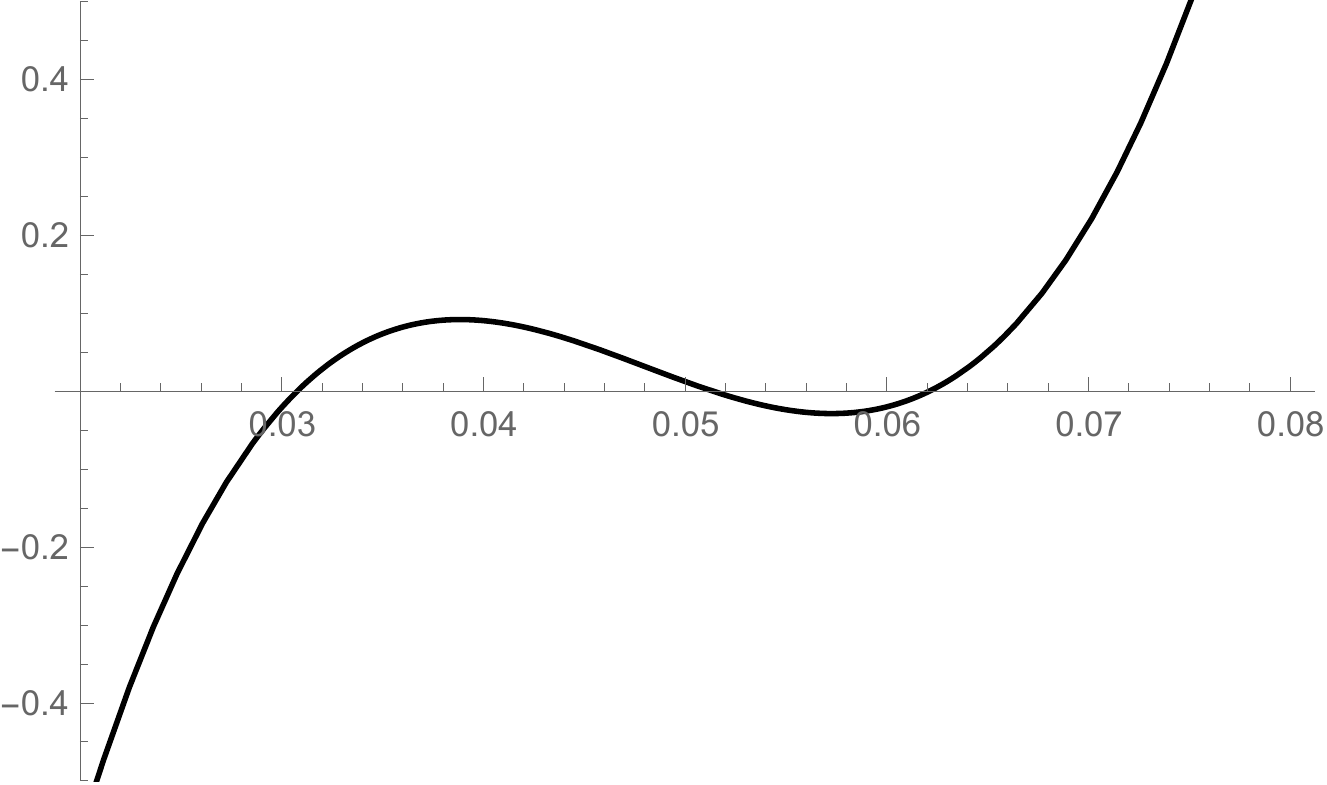}
\includegraphics[width=6cm]{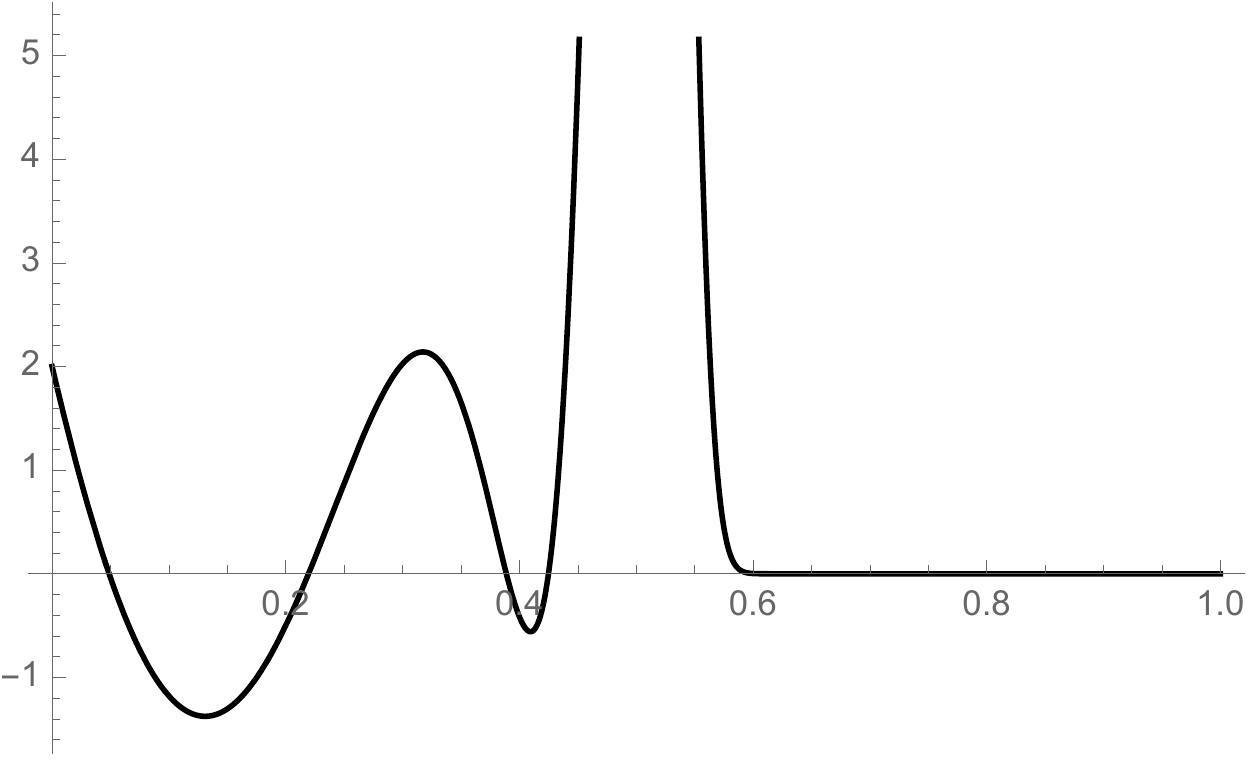}
\includegraphics[width=6cm]{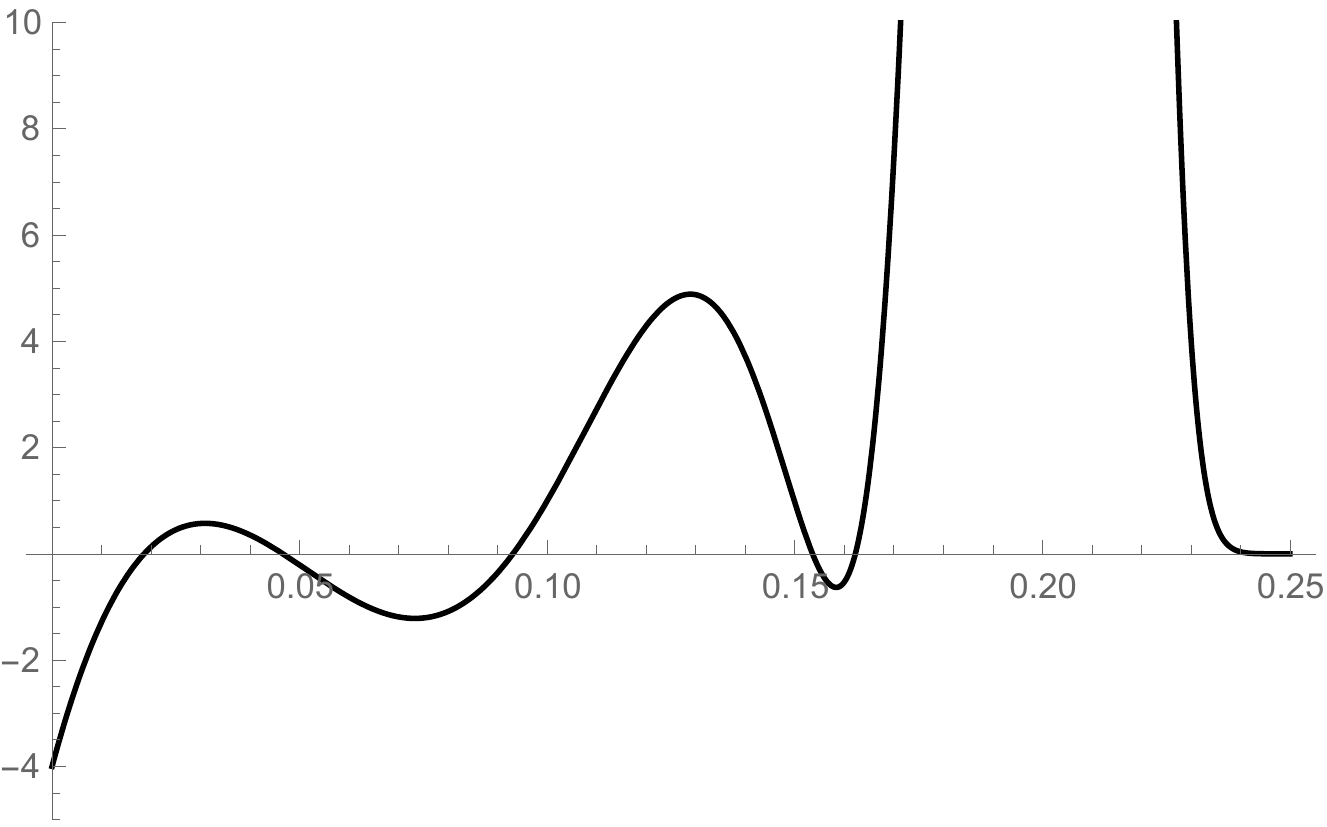}
\caption{\label{Examples} The graphs of the second derivatives of $R(K_{2}^{5} * K_{3}^{4} * K_{4}^{3} * K_{5}^{92}), R(K_{2}^{2} * K_{4}^{2} * K_{14}^{750}),$ and $R(K_{2}^{5} * K_{3}^{4} * K_{5}^{116} * K_{14}^{100,000,000})$ respectively.}
\end{figure}
\end{center}

\section{Acknowledgements}
This research was conducted during the 2015 Research Experience for Undergraduates at University of Texas at Tyler under the direction of Dr. David Milan. We would like to thank the organizers of the UT Tyler REU for their guidance during this project. This REU was supported by the National Science Foundation (DMS-1359101).

\bibliographystyle{amsplain}

\end{document}